\DeclarePairedDelimiter{\ceil}{\lceil}{\rceil}
\tikzset{every loop/.style={}}
\tikzset{
    labl/.style={anchor=south, rotate=90, inner sep=.5mm}
}
\setlist[enumerate,1]{label=(\arabic*)}%enumerate always with round brackets
\newlist{steplist}{enumerate}{1}%custom enumerate environment
\setlist[steplist]{label={Step \arabic*:}, ref={Step \arabic*}}
\newtheorem{thm}{Theorem}[section]
\newtheorem{lem}[thm]{Lemma}
\newtheorem{prop}[thm]{Proposition}
\numberwithin{equation}{section}
\theoremstyle{definition}
\newtheorem{defn}[thm]{Definition} % definition numbers are dependent on theorem numbers
\newtheorem{remk}[thm]{Remark}
\newcommand{\cC}{\mathcal{C}}
\newcommand{\cH}{\mathcal{H}}
\newcommand{\cV}{\mathcal{V}}
\newcommand{\G}{\Gamma}
\newcommand{\La}{\Lambda}
\newcommand{\fg}{\mathfrak{g}}
\newcommand{\fF}{\mathfrak{F}}
\newcommand{\acts}{\curvearrowright}
\newsavebox{\@brx}
\newcommand{\llangle}[1][]{\savebox{\@brx}{\(\m@th{#1\langle}\)}%
	\mathopen{\copy\@brx\kern-0.5\wd\@brx\usebox{\@brx}}}
\newcommand{\rrangle}[1][]{\savebox{\@brx}{\(\m@th{#1\rangle}\)}%
	\mathclose{\copy\@brx\kern-0.5\wd\@brx\usebox{\@brx}}}
\begin{document}
\begin{center}
{\LARGE\bf
A cubulation with no factor system}\\
\bigskip
{\large Sam Shepherd}
\end{center}

\begin{abstract}
	The primary method for showing that a given cubulated group is hierarchically hyperbolic is by constructing a factor system on the cube complex.
	In this paper we show that such a construction is not always possible, namely we construct a cubulated group for which the cube complex does not have a factor system.
	We also construct a cubulated group for which the induced action on the contact graph is not acylindrical.
\end{abstract}
\bigskip

\medskip
\section{Introduction}

A \emph{cubulated group} $G\acts X$ is a group $G$ together with a proper cocompact action of $G$ on a CAT(0) cube complex $X$ (and if $G$ is fixed then each such action is called a \emph{cubulation of $G$}).
Numerous groups can be cubulated, including small cancellation groups, finite volume hyperbolic 3-manifold groups and many Coxeter groups -- see \cite{WiseRiches} for further background and examples.
In turn, many cubulated groups are examples of hierarchically hyperbolic groups (HHG's), a class of groups that includes hyperbolic groups, relatively hyperbolic groups and mapping class groups among others -- see \cite{BehrstockHagenSisto17,BehrstockHagenSisto19} for relevant definitions and background.
The primary method for showing that a given cubulated group $G\acts X$ is a HHG is by constructing a certain family of subcomplexes of $X$, called a factor system, which we define below following \cite{BehrstockHagenSisto17}.
Many cubulated groups are known to have factor systems, including virtually special cubulated groups \cite[Proposition B]{BehrstockHagenSisto17} -- see also \cite{HagenSusse20}.

\begin{defn}\label{defn:factor}
	Let $X$ be a CAT(0) cube complex. Each hyperplane $H$ in $X$ has an associated carrier $H\times[-1,1]\subset X$, and we call the convex subcomplexes $H\times\{\pm1\}$ \emph{combinatorial hyperplanes}. For a convex subcomplex $K\subset X$ we let $\fg_K: X\to K$ denote the closest point projection to $K$.
	A collection $\fF$ of subcomplexes of $X$ is called a \emph{factor system} if it satisfies the following:
	\begin{enumerate}
		\item $X\in\fF$.
		\item Each $F\in\fF$ is a nonempty convex subcomplex of $X$.
		\item\label{item:countF} There exists $\Delta\geq 0$ such that for all $x\in X^{(0)}$, at most $\Delta$ elements of $\fF$ contain $x$.
		\item\label{item:hyp} Every nontrivial convex subcomplex parallel to a combinatorial hyperplane of $X$ is in $\fF$.
		\item\label{item:proj} There exists $\xi\geq0$ such that for any pair of subcomplexes $F,F'\in\fF$, either $\fg_F(F')\in\fF$ or diam$(\fg_F(F'))<\xi$.
	\end{enumerate}
\end{defn}

Our first theorem is as follows, which answers a question of Behrstock--Hagen--Sisto \cite[Question 8.13]{BehrstockHagenSisto17}.

\begin{thm}\label{thm:nofactor}
	There is a cubulated group $G\acts X$ such that $X$ does not have a factor system.
\end{thm}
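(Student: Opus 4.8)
The plan is to reduce the whole statement to a single local obstruction: a combinatorial hyperplane whose ``shadows'' under closest point projection grow without bound while all passing through one fixed vertex. Concretely, I will construct a CAT(0) cube complex $X$, a proper cocompact action $G\acts X$, a nontrivial combinatorial hyperplane $A\subset X$, a combinatorial hyperplane $B\subset X$, an element $g\in G$, and a vertex $v\in A$, such that for every $n\geq 1$ the projection $\fg_A(g^nB)$ is a proper convex subcomplex of $A$ satisfying
\[
v\in\fg_A(g^nB)\subsetneq\fg_A(g^{n+1}B)\subsetneq A
\qquad\text{and}\qquad
\diam\bigl(\fg_A(g^nB)\bigr)\ \geq\ n .
\]
In the model I have in mind, $X$ is assembled from product pieces (thickened Euclidean flats) glued along lower-dimensional subcomplexes with a built-in ``shift'', so that conjugation by $g$ moves $B$ ``diagonally'' relative to $A$; then $\fg_A(g^nB)$ is a segment of length at least $n$ inside a fixed combinatorial geodesic of $A$ issuing from $v$, and the nesting and unboundedness follow from the self-similarity of the construction. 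One is thereby pushed toward an explicit finite graph of cubulated groups -- for instance with abelian or free vertex groups and cyclic edge groups -- whose edge inclusions are deliberately misaligned.

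Granting this, the theorem follows quickly. Suppose for contradiction that $\fF$ is a factor system for $X$, with multiplicity bound $\Delta$ from condition~(3) of Definition~\ref{defn:factor} and projection constant $\xi$ from condition~(5). Since $A$ and each $g^nB$ are nontrivial combinatorial hyperplanes, hence convex subcomplexes parallel to a combinatorial hyperplane of $X$, condition~(4) gives $A\in\fF$ and $g^nB\in\fF$ for all $n\geq 1$. Choose an integer $n_0>\xi$. For each $n\geq n_0$ we have $\diam\bigl(\fg_A(g^nB)\bigr)\geq n>\xi$, so condition~(5) applied to the pair $(A,g^nB)$ forces $\fg_A(g^nB)\in\fF$. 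By the strict nesting, the subcomplexes $\{\fg_A(g^nB):n\geq n_0\}$ are pairwise distinct, and each of them contains the vertex $v$. Hence $v$ lies in infinitely many elements of $\fF$, contradicting condition~(3). Therefore $X$ has no factor system.

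All the weight of the argument thus sits in the construction and in verifying the displayed growth, and this is where I expect the genuine difficulty to be. The two delicate points are: (i) arranging the gluing data so that $X$ is CAT(0) and the $G$-action is proper and cocompact while still producing the ``shift'' responsible for the growth; and (ii) computing $\fg_A(g^nB)$ exactly, which reduces to identifying precisely the hyperplanes of $X$ that separate $g^nB$ from $A$ and checking that their traces on $A$ sweep out exactly the asserted growing segment. In~(ii), cocompactness is simultaneously the source of the difficulty -- the growth must be generated by a self-similar mechanism rather than by ad hoc choices -- and the key simplification, since it suffices to analyze the single step from $\fg_A(gB)$ to $\fg_A(g^2B)$ and then iterate using $g$-equivariance of the relative position of $g^nB$ and $A$.
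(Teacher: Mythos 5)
Your concluding deduction is correct and is essentially identical to the paper's own proof of Theorem \ref{thm:Xnofactor}: conditions (4) and (5) of Definition \ref{defn:factor} force the growing projections into $\fF$, and their common vertex violates the multiplicity bound (3). (The strict nesting you impose is more than is needed; it suffices that the projections are finite with unbounded diameter, since then infinitely many of them are distinct.) But everything else --- the existence of a proper cocompact action together with combinatorial hyperplanes $A$, $B$, an element $g\in G$ and a vertex $v$ with $v\in \fg_A(g^nB)$ and $\diam(\fg_A(g^nB))\geq n$ --- is asserted rather than proved, and this is the entire content of the theorem. Your proposed mechanism (``thickened flats glued with a shift'', a graph of groups with abelian or free vertex groups and cyclic edge groups whose inclusions are ``misaligned'') is not verified and is not obviously realizable: with periodic (abelian) pieces the projection of one hyperplane onto a disjoint one tends to be either uniformly bounded or an unbounded convex subcomplex, and neither produces the required contradiction. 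What is needed is a projection that is \emph{finite yet of unbounded length} as the second hyperplane varies, with all of these projections passing through a single fixed vertex.

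The paper obtains exactly this from an \emph{anti-torus} in a lattice $\G$ acting on a product of trees $T_1\times T_2$: elements $g_1,g_2$ with intersecting axes spanning a flat $\Pi=\ell_1\times\ell_2$ that is aperiodic because no nonzero powers of $g_1$ and $g_2$ commute. A pigeonhole argument (Lemma \ref{lem:w1geo}) produces, for each $n$, a $w_1$-geodesic meeting $\Pi$ in a path $\gamma\times\{y\}$ through $p_1$ of length at least $n$, and the anti-torus condition is precisely what forces $\gamma$ to be finite. A second essential step your sketch omits: these $w_1$-geodesics are not parallel to combinatorial hyperplanes of $T_1\times T_2$ itself (indeed $\G\acts T_1\times T_2$ \emph{does} have a factor system), so the paper $\G$-equivariantly attaches an infinite strip along each $w_1$-geodesic; the hyperplane running down the middle of each strip turns the geodesic into a combinatorial hyperplane of the new complex $X$, which is what triggers condition (4). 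Without inputs of this kind your reduction has nothing to apply to, so as written the argument has a genuine gap.
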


In \cite[Theorem A]{HagenSusse20} Hagen--Susse provide three separate sufficient conditions for a cubulated group $G\acts X$ to admit a factor system: (1) the action is rotational, (2) it satisfies the weak finite height condition for hyperplanes, and (3) it satisfies the essential index condition together with the Noetherian intersection of conjugates condition on hyperplane stabilizers. Theorem \ref{thm:nofactor} gives the first known example of a cubulated group that fails all of these conditions (see Remark \ref{remk:3conditions} for more on this).
The example behind Theorem \ref{thm:nofactor} also contains pairs of hyperplanes that are $L$-well-separated but not $(L-1)$-well-separated for arbitrarily large $L$ (Remark \ref{remk:wellseparated}), which provides a negative answer to a question of Genevois \cite[Question 6.69 (first part)]{Genevois19}.

Associated to a CAT(0) cube complex $X$ is the \emph{contact graph} $\cC X$: the vertices are the hyperplanes of $X$, and edges correspond to pairs of hyperplanes whose carriers intersect (equivalently, pairs of hyperplanes that are not separated by a third hyperplane).
The contact graph is always a quasitree \cite{Hagen14}, so in particular it is hyperbolic.
Moreover, the contact graph is a key ingredient of the HHG structure that one usually builds for cubulated groups.
More precisely, if a cubulated group $G\acts X$ has a $G$-invariant factor system $\fF$, then one can build a HHG structure for $G$ by taking the contact graph $\cC F$ of each $F\in\fF$ and coning off certain subgraphs of $\cC F$ that correspond to smaller elements of $\fF$ -- see \cite{BehrstockHagenSisto17} for details.
The existence of a factor system for $G\acts X$ also implies that the induced action of $G$ on $\cC X$ is acylindrical \cite[Theorem D]{BehrstockHagenSisto17}.
(Recall that the action of a group $G$ on a metric space $(M,d)$ is \emph{acylindrical} if for all $\epsilon>0$ there exist $R,N>0$ such that $d(x,y)\geq R$ implies that there are at most $N$ elements $g\in G$ satisfying $d(x,gx),d(y,gy)<\epsilon$.)
The following theorem is therefore a strengthening of Theorem \ref{thm:nofactor}.

\begin{thm}\label{thm:notacyl}
	There is a cubulated group $G\acts X$ for which the induced action on the contact graph $\cC X$ is not acylindrical.
\end{thm}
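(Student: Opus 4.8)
The plan is to construct a single cubulated group $G\acts X$ that establishes both Theorem \ref{thm:notacyl} and, as a consequence, Theorem \ref{thm:nofactor}: once we know $G\acts\cC X$ is not acylindrical, Theorem \ref{thm:nofactor} is immediate, since if $X$ had a factor system then, after arranging $G$-invariance (using cocompactness), \cite[Theorem D]{BehrstockHagenSisto17} would force $G\acts\cC X$ to be acylindrical. So the whole task is to produce $X$ and to understand $\cC X$. Unwinding the definition of acylindricity, it is enough to fix $\epsilon>0$ (say $\epsilon=1$) and to exhibit, for every $n\in\N$, two hyperplanes $H_n,H_n'$ of $X$ with $d_{\cC X}(H_n,H_n')\geq n$ and with $\operatorname{Stab}_G(H_n)\cap\operatorname{Stab}_G(H_n')$ infinite. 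Granting this, for any $R,N>0$ choose $n\geq R$; then every $g$ in the (infinite) common stabiliser satisfies $d_{\cC X}(H_n,gH_n)=d_{\cC X}(H_n',gH_n')=0<\epsilon$, so the set of such $g$ exceeds $N$. In particular $\cC X$ must have infinite diameter, which is consistent with its being a quasitree \cite{Hagen14}.

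The construction of $X$ is where the content lies. Since virtually special cubulations have factor systems \cite[Proposition B]{BehrstockHagenSisto17}, hence acylindrical contact-graph actions, the complex $X/G$ cannot be virtually special; I would build $G\acts X$ as the action of $\pi_1$ on the universal cover of a compact non-positively curved, non-special cube complex (one may take $X$ to be $2$-dimensional). The design goal is that $X$ should contain, for each $n$, a ``staircase''-type region bounded by two hyperplanes $H_n,H_n'$ such that, on the one hand, its combinatorial ``width'' grows with $n$ --- so that $H_n,H_n'$ are $L$-well-separated but not $(L-1)$-well-separated for arbitrarily large $L$, matching Remark \ref{remk:wellseparated} --- and no single hyperplane of $X$ crosses both $H_n$ and $H_n'$; while on the other hand an infinite subgroup $K_n\leq G$ acts on $X$ fixing $H_n$, fixing $H_n'$, and in fact fixing every hyperplane transverse to the $n$-th staircase.

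Two things then need to be verified, and the difficulty is precisely the tension between them. First, \emph{$d_{\cC X}(H_n,H_n')\geq n$}: this I would obtain from the standard lower bound for contact-graph distance, exhibiting a chain of $n$ pairwise well-separated hyperplanes, each separating $H_n$ from $H_n'$, and checking that no hyperplane of $X$ has carrier meeting two far-apart members of the chain --- a single hyperplane crossing both $H_n$ and $H_n'$ would force $d_{\cC X}(H_n,H_n')\leq 2$. Second, \emph{$\operatorname{Stab}_G(H_n)\cap\operatorname{Stab}_G(H_n')$ is infinite}: the naive way to make a group fix two hyperplanes is to place them opposite one another in a cubical product region, but such a region would again force $d_{\cC X}(H_n,H_n')\leq 2$ via its transverse hyperplanes. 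The way out --- and the reason non-speciality is essential --- is that the only ``product'' structure joining $H_n$ and $H_n'$ is a CAT(0) flat swept out by the axes of the elements of $K_n$, which is \emph{not} a cubical subcomplex, so $\cC X$ does not see it and the two requirements become compatible. Reconciling these two demands simultaneously is the main obstacle.

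Finally, properness and cocompactness of $G\acts X$ are routine from the description of $X$ as the universal cover of a compact non-positively curved cube complex. This gives Theorem \ref{thm:notacyl}, and Theorem \ref{thm:nofactor} follows as above. As a by-product, since each of the three Hagen--Susse sufficient conditions \cite{HagenSusse20} implies the existence of a factor system, the same example fails all of them, and it likewise answers the first part of \cite[Question 6.69]{Genevois19} negatively.
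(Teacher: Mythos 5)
Your reduction of Theorem \ref{thm:nofactor} to Theorem \ref{thm:notacyl} is fine and matches the paper's logic, but the core of your plan --- producing, for each $n$, hyperplanes $H_n,H_n'$ with $d_{\cC X}(H_n,H_n')\geq n$ \emph{and} with $K_n:=\operatorname{Stab}_G(H_n)\cap\operatorname{Stab}_G(H_n')$ infinite --- asks for something that cannot exist in a proper cocompact cubulation. If $K_n$ is infinite, it preserves the carriers $N(H_n),N(H_n')$ and hence preserves the gate $\fg_{N(H_n)}(N(H_n'))$, a nonempty convex subcomplex. Properness of the action on the locally finite complex $X$ forces any nonempty subcomplex invariant under an infinite subgroup to be unbounded, so the gate contains an edge; the hyperplane $W$ dual to that edge crosses both carriers (the hyperplanes crossing a gate are exactly those crossing both subcomplexes), hence $W$ contacts both $H_n$ and $H_n'$ and $d_{\cC X}(H_n,H_n')\leq 2$. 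The ``tension'' you flag is therefore an outright contradiction, and your proposed escape --- a non-cubical flat joining the two hyperplanes in a non-special complex --- does not help, because the gate argument is purely combinatorial and never mentions flats or cubical product regions. (Indeed, your own observation that a hyperplane crossing both $H_n$ and $H_n'$ forces distance at most $2$ is precisely the mechanism that kills the plan.)

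The fix, and what the paper actually does, is to negate acylindricity with arbitrarily large \emph{finite} sets rather than one infinite common stabilizer: for a fixed $\epsilon$ one needs pairs of hyperplanes at distance tending to infinity together with sets of elements of unbounded cardinality moving both hyperplanes less than $\epsilon$ --- and these elements need not stabilize either hyperplane. Concretely, the paper forms the HNN extension $\La=\G*_{\langle g_1\rangle}$ acting on a tree of spaces $Y$ whose vertex spaces are copies of $T_1\times T_2$ and whose edge spaces are strips glued along $w_1$-geodesics; an element $g$ translates a staircase of hyperplanes $g^iH$ (each running along an edge space) by a step of size $r\leq|w_1|$, while the anti-torus, via Lemma \ref{lem:w1geo}, makes the overlap $L$ between consecutive steps arbitrarily large. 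Then $d_{\cC Y}(H,g^iH)=2$ for $1\leq i<M$ with $M\approx L/r$ unbounded, whereas $d_{\cC Y}(H,g^pH)\geq p/M\to\infty$, so $\{1,g,\dots,g^{M-1}\}$ is a finite but arbitrarily large set of elements moving two far-apart hyperplanes by at most $2$. Beyond this structural error, note that your writeup supplies no actual construction --- it is a list of desiderata --- so even after correcting the strategy everything would still need to be instantiated.
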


This theorem is even more surprising in light of \cite[Theorem 1.1]{Genevois19b}, which implies that every cubulated group $G\acts X$ has a \emph{non-uniformly acylindrical} action on $\cC X$ (\emph{non-uniformly} meaning that ``at most $N$ elements'' is replaced by ``finitely many elements'' in the definition of acylindrical).

Briefly, the construction for Theorem \ref{thm:nofactor} is to take a free cocompact action of a group $\G$ on a product of trees $T_1\times T_2$ that contains an anti-torus, and then $\G$-equivariantly attach infinite strips to $T_1\times T_2$ along anti-tori axes.
The details are in Section \ref{sec:nofactor}.
The construction for Theorem \ref{thm:notacyl} builds on this by defining a certain HNN extension $\La=\G*_{\mathbb{Z}}$, and an action of $\La$ on a cube complex that splits as a tree of spaces, with vertex spaces being copies of $T_1\times T_2$ and edge spaces corresponding to the infinite strips described above.
The arguments for this are in Section \ref{sec:nonacyl}.
Although $\G$ admits a cubulation without a factor system, it is still a HHG because $\G\acts T_1\times T_2$ is another cubulation that does have a factor system.
On the other hand, we do not know whether our second group $\La$ admits a cubulation with a factor system, and we do not know whether $\La$ is a HHG.
In particular, the question of whether all cubulated groups are HHG's is still open \cite[Question A]{BehrstockHagenSisto19}.
One possible strategy is to find a cubulated group with no largest acylindrical action (see Definition \ref{defn:largest}), since all HHG's have a largest acylindrical action \cite{AbbottBehrstockDurham21}.
This does not work for the group $\La$ however, as we prove in Proposition \ref{prop:acyl} that $\La$ does have a largest acylindrical action.

\textbf{Acknowledgements:}\,
I am grateful for Mark Hagen's suggestion of considering anti-tori, which made my construction for Theorem \ref{thm:nofactor} more general.
Thanks go to Anthony Genevois for his comments, in particular regarding Remark \ref{remk:deltaK}.
And I thank the referee for their comments and corrections.
I am also thankful for the support of the Institut Henri Poincaré (UAR 839 CNRS-Sorbonne Université), and LabEx CARMIN (ANR-10-LABX-59-01).

\medskip
\section{Example with no factor system}\label{sec:nofactor}

Let $T_1$ and $T_2$ be locally finite trees, and let $\G$ be a group acting freely and cocompactly on $T_1\times T_2$.
Suppose that elements $g_1,g_2\in \G$ form an \emph{anti-torus}, meaning firstly that they translate non-trivially along intersecting axes $\ell_1\times\{p_2\}, \{p_1\}\times\ell_2\subset T_1\times T_2$ respectively (so $p_1\in\ell_1$ and $p_2\in\ell_2$), and secondly that no non-zero powers of $g_1$ and $g_2$ commute.
In addition, suppose that $g_1,g_2$ are not proper powers in $\G$.
The condition that no powers of $g_1$ and $g_2$ commute is equivalent to saying that the flat $\Pi=\ell_1\times\ell_2$ is not periodic.
We also note that the existence of an anti-torus implies that $\G$ is \emph{irreducible} \cite[Lemma 18]{JanzenWise09}, meaning it does not have a finite-index subgroup that splits as a product $\G_1\times\G_2$ with $\G_i$ acting trivially on $T_{3-i}$.
Examples of anti-tori were constructed by Wise \cite{Wise07}, Janzen--Wise \cite{JanzenWise09} and Rattaggi \cite{Rattaggi05}.
The smallest example is in \cite{JanzenWise09}, where $(T_1\times T_2)/\G$ consists of one vertex, four edges and four 2-cells.
See \cite{Caprace19} for more about anti-tori and irreducible lattices in products of trees.

Choose orientations for the edges in the finite quotient $(T_1\times T_2) /\G$, and label them with distinct letters from an alphabet $A$. Lift this labeling to $T_1\times T_2$. Each finite edge path in $T_1\times T_2$ or its quotient is thus labeled by some word $w$ on $A^\pm$, and we denote the length of $w$ by $|w|$.
The axes $\ell_1\times\{p_2\}, \{p_1\}\times\ell_2$ descend to loops in $(T_1\times T_2) /\G$ based at $\G\cdot(p_1,p_2)$; say these loops are labeled by words $w_1,w_2$ respectively.
Lifts of the $w_1$-loop to $T_1\times T_2$ will be referred to as \emph{$w_1$-geodesics} (equivalently, these are $\G$-translates of $\ell_1\times\{p_2\}$).

\begin{lem}\label{lem:w1geo}
	For any $n\geq 1$ there exists a $w_1$-geodesic whose intersection with $\Pi$ is a finite path of the form $\gamma\times\{y\}\subset \ell_1\times\ell_2$, with $p_1\in\gamma$ and $\gamma$ of length at least $n$.
\end{lem}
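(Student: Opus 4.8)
The plan is to realise the required $w_1$-geodesic as a translate $g_2^{\,k}\cdot(\ell_1\times\{p_2\})$ for a suitable $k\ge 1$, exploiting that $g_2$ drifts along $\ell_2$ while fixing $p_1$ in the $T_1$-direction. Writing the product action as $g_i=(g_i^{(1)},g_i^{(2)})$, the hypothesis that $g_1$ translates along $\ell_1\times\{p_2\}$ means $g_1^{(1)}$ translates along $\ell_1$ and $g_1^{(2)}$ fixes $p_2$, and symmetrically $g_2^{(2)}$ translates along $\ell_2$ while $g_2^{(1)}$ fixes $p_1$. Hence for every $k\ge 1$ the set $g_2^{\,k}\cdot(\ell_1\times\{p_2\})=(g_2^{(1)})^k\ell_1\times\{(g_2^{(2)})^kp_2\}$ is a $w_1$-geodesic passing through $(p_1,(g_2^{(2)})^kp_2)$, and since $(g_2^{(2)})^kp_2\in\ell_2$ its intersection with $\Pi=\ell_1\times\ell_2$ is $\gamma_k\times\{(g_2^{(2)})^kp_2\}$, where $\gamma_k:=(g_2^{(1)})^k\ell_1\cap\ell_1$ is a subpath of $\ell_1$ containing $p_1$. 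It therefore suffices to find $k$ for which $\gamma_k$ is a \emph{finite} path of length at least $n$.

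To get the length bound I would use local finiteness of $T_1$: the ball $B_n(p_1)$ is finite, so the cyclic group $\langle g_2^{(1)}\rangle$ acts on it through a finite quotient, and some power $(g_2^{(1)})^m$ with $m\ge 1$ fixes $B_n(p_1)$ pointwise. Then $\gamma_m\supseteq\ell_1\cap B_n(p_1)$, a subpath of $\ell_1$ of length $2n$ centred at $p_1$, so $\gamma_m$ has length at least $n$ and contains $p_1$, as needed.

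The main obstacle is guaranteeing finiteness of the overlap — ruling out that $\gamma_k$ is a ray or all of $\ell_1$ — and this is precisely where the anti-torus hypothesis, in the form that $\Pi$ is not periodic, must enter. Indeed, if $(g_2^{(1)})^k$ stabilised $\ell_1$ then $g_2^{\,k}$ would stabilise $\Pi$ and act on it as a pure translation along $\ell_2$; were the analogous phenomenon to hold on the other factor (some power of $g_1^{(2)}$ stabilising $\ell_2$, so that a power of $g_1$ acts on $\Pi$ as a pure translation along $\ell_1$), then suitable powers of $g_1$ and $g_2$ would restrict to commuting translations of $\Pi$, hence commute in $\G$ by freeness of the action — contradicting the anti-torus condition. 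So at most one factor can exhibit this degeneracy, and one uses this to steer the construction — if necessary replacing $g_2^{\,k}\cdot(\ell_1\times\{p_2\})$ by a translate whose $T_1$-direction is a line meeting $\ell_1$ in a long finite path through $p_1$ while keeping its height on $\ell_2$ — so that the overlap with $\Pi$ is the desired finite path; a parallel argument excludes the overlap being a ray. Pinning down this last step, and in particular handling cleanly the case where powers of $g_2^{(1)}$ already stabilise $\ell_1$ (so that the geodesics $g_2^{\,k}\cdot(\ell_1\times\{p_2\})$ lie inside $\Pi$), is the delicate part.
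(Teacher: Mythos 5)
Your argument for the length lower bound is fine and is essentially an elliptic-isometry version of the paper's first pigeonhole step: the paper instead considers the rectangles in $\Pi$ with sides $w_1^n$ and $w_2^i$ and pigeonholes on the labels $v_i$ of their top sides to find a translate of $\ell_1\times\{p_2\}$ overlapping $\Pi$ in a path labelled $w_1^n$ through $p_1$. (One caveat: writing $g=(g^{(1)},g^{(2)})$ presumes the action preserves the two factors, which the paper does not assume and does not need, since it works with edge labels rather than with a product decomposition of the isometries; for the elements $g_2^k$ you use this is harmless, but it is an extra hypothesis.)

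The genuine gap is the finiteness of the overlap, which is exactly where the anti-torus hypothesis has to do quantitative work, and your sketch does not close it. Two failure modes remain open. First, in the ``degenerate'' case where some power $(g_2^{(1)})^m$ stabilizes $\ell_1$, every candidate $g_2^k\cdot(\ell_1\times\{p_2\})$ lies inside $\Pi$ with infinite overlap, and your proposed fix --- ``replacing it by a translate whose $T_1$-direction meets $\ell_1$ in a long finite path through $p_1$ at the same height'' --- is precisely the assertion of the lemma; no mechanism is given for producing such a translate, so the argument is circular there. Your dichotomy (``at most one factor can exhibit this degeneracy'') only rules out the two degeneracies holding simultaneously; it does not tell you which one holds, and it gives no construction in the bad case. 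Second, even when $(g_2^{(1)})^m$ does not stabilize $\ell_1$, the intersection $(g_2^{(1)})^m\ell_1\cap\ell_1$ can a priori be an infinite ray through $p_1$, and the ``parallel argument'' you invoke to exclude rays is not supplied; the dichotomy says nothing about this case. The paper handles both at once by a second pigeonhole: among the rectangles with sides $w_1^k$ and $w_2^j$ it finds some $k$ whose right-hand side is labelled $w_2^j$, so that an overlap of length at least $|w_1^k|$ would force the top side to be labelled $w_1^k$ and hence force $g_1^k$ and $g_2^j$ to commute (by freeness, since both products send $(p_1,p_2)$ to the same corner), contradicting the anti-torus condition. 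Some such quantitative step, converting ``long overlap'' into ``commuting powers,'' is what your proposal is missing.
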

\begin{proof}
	For each $i\geq 1$, consider the rectangle in $\Pi$ with two sides labeled by $w_1^n$ and $w_2^i$ that meet at the vertex $(p_1,p_2)$, as shown in Figure \ref{fig:w1nw2i}.
	Note that the bottom side is a subpath of the axis $\ell_1\times\{p_2\}$, while the left-hand side is a subpath of the axis $\{p_1\}\times\ell_2$.
	Let $\alpha_i\times\{y_i\}$ denote the top side, and suppose that it is labeled by the word $v_i$.
	There are only finitely many words of length $|w_1^n|$, so $v_i=v_{i+j}$ for some $i,j\geq1$. 
	Since $\alpha_i\times\{y_i\}$ and $\alpha_{i+j}\times\{y_{i+j}\}$ are both labeled by $v_i$, the element $g_2^j$ maps $\alpha_i\times\{y_i\}$ to $\alpha_{i+j}\times\{y_{i+j}\}$.
	Moreover, $g_2^j$ preserves the axis $\{p_1\}\times\ell_2$, so it maps the rectangle shown in Figure \ref{fig:w1nw2i} to another rectangle in $\Pi$. Restricting to the bottom sides of the rectangles, we see that $g_2^j$ maps the subpath of $\ell_1\times\{p_2\}$ labeled $w_1^n$ to $\alpha_j\times\{y_j\}$, so $v_j=w_1^n$.
	
	\begin{figure}[H]
		\centering
		\scalebox{1}{
			\begin{tikzpicture}[auto,node distance=2cm,
				thick,every node/.style={circle,draw,fill,inner sep=0pt,minimum size=7pt},
				every loop/.style={min distance=2cm},
				hull/.style={draw=none},
				]
				\tikzstyle{label}=[draw=none,fill=none]
				\tikzstyle{a}=[isosceles triangle,sloped,allow upside down,shift={(0,-.05)},minimum size=3pt]
				
				\node at (0,0) {};
				\draw(0,0)-- node[a]{}(4,0);
				\draw(4,0)-- (4,3);
				\draw(0,0)-- node[a]{}(0,3);
				\draw(0,3)[blue,ultra thick]-- node[a]{}(4,3);

				\node[label] at (-.5,-.4) {$(p_1,p_2)$};
				\node[label] at (2,-.4) {$w_1^n$};
				\node[label] at (-.5,1.5) {$w_2^i$};
				\node[label] at (2,2.6) {$v_i$};
				\node[label,blue] at (2,3.4) {$\alpha_i\times\{y_i\}$};
				
			\end{tikzpicture}
		}
		\caption{Rectangle in $\Pi$ with two sides labeled by $w_1^n$ and $w_2^i$ that meet at the vertex $(p_1,p_2)$.}\label{fig:w1nw2i}
	\end{figure}
	
	The path $\alpha\times\{y\}:= \alpha_j\times\{y_j\}$ extends to a unique $w_1$-geodesic.
	Let $\gamma\times\{y\}\subset \ell_1\times\ell_2$ denote the intersection of this $w_1$-geodesic with $\Pi$.
	Since $p_1\in\alpha\subset\gamma$, and since $\alpha$ has length $|w_1^n|\geq n$, it remains to show that $\gamma$ is finite.
	Say that $p_1$ splits $\gamma$ into subpaths $\gamma_1,\gamma_2$, with $\alpha$ being an initial segment of $\gamma_2$.
	We will show that $\gamma_2$ is finite -- finiteness of $\gamma_1$ follows by a similar argument.
	
	For $k\geq n$, consider the rectangle in $\Pi$ with two sides labeled by $w_1^k$ and $w_2^j$ that meet at the vertex $(p_1,p_2)$, as shown in Figure \ref{fig:w1kw2j}. 
	Let $\beta_k\times\{y\}$ denote the top side.
	Note that $\alpha\times\{y\}$ is an initial segment of $\beta_k\times\{y\}$. Say the right-hand side is labeled by the word $v'_k$. The same argument we used earlier in the proof shows that $v'_k=w_2^j$ for some $k$. For this $k$, we then argue that $\gamma_2$ has length less that $|w_1^k|$. Indeed, otherwise $\gamma_2\times\{y\}$ would contain $\beta_k\times\{y\}$ as an initial segment, so $\beta_k\times\{y\}$ would be labeled by $w_1^k$, but then the labels on the rectangle would imply that $g_1^k$ and $g_2^j$ commute, contradicting the fact that $g_1,g_2$ form an anti-torus. 
	Thus $\gamma_2$ is finite, as required.
\end{proof}

	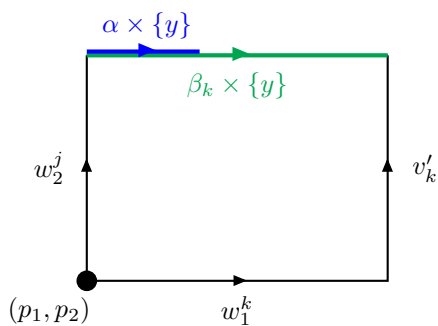
\begin{figure}[H]
	\centering
	\scalebox{1}{
		\begin{tikzpicture}[auto,node distance=2cm,
			thick,every node/.style={circle,draw,fill,inner sep=0pt,minimum size=7pt},
			every loop/.style={min distance=2cm},
			hull/.style={draw=none},
			]
			\tikzstyle{label}=[draw=none,fill=none]
			\tikzstyle{a}=[isosceles triangle,sloped,allow upside down,shift={(0,-.05)},minimum size=3pt]
			
			\node at (0,0) {};
			\draw(0,0)-- node[a]{}(4,0)-- node[a]{}(4,3);
			\draw(0,0)-- node[a]{}(0,3);
			\draw(0,3)-- (4,3);
			
			\draw[Green,ultra thick](0,3)--node[a]{}(4,3);
			\draw[blue,ultra thick](0,3.05)--node[a]{}(1.5,3.05);

			\node[label] at (-.5,-.4) {$(p_1,p_2)$};
			\node[label] at (2,-.4) {$w_1^k$};
			\node[label] at (-.5,1.5) {$w_2^j$};
			\node[label] at (4.5,1.5) {$v'_k$};
			\node[label,blue] at (.8,3.4) {$\alpha\times\{y\}$};
			\node[label,Green] at (2,2.6) {$\beta_k\times\{y\}$};
			
		\end{tikzpicture}
	}
	\caption{Rectangle in $\Pi$ with two sides labeled by $w_1^k$ and $w_2^j$ that meet at the vertex $(p_1,p_2)$.}\label{fig:w1kw2j}
\end{figure}

To construct a cubulation of $\G$ with no factor system we first take the quotient $(T_1\times T_2)/\G$, then we attach an annulus by gluing one boundary component along the edge loop labeled by $w_1$, and then we let $X$ be the universal cover. If the annulus is subdivided into $|w_1|$ squares then $X$ is a CAT(0) cube complex. Attaching the annulus to $(T_1\times T_2)/\G$ doesn't change the fundamental group, so $\G$ acts on $X$ by deck transformations.
The picture upstairs is that $X$ is obtained from $T_1\times T_2$ by attaching an infinite strip to each $w_1$-geodesic (and only one strip since $g_1$ is not a proper power).
We already remarked that $(T_1\times T_2)/\G$ has only four 2-cells for the example in \cite{JanzenWise09}, moreover the word $w_1$ has length two in this case, so the cube complex $X/\G$ would be a $\cV\cH$-complex consisting of just six 2-cells.

\begin{thm}\label{thm:Xnofactor}
	$X$ has no factor system.
\end{thm}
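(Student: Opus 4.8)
The plan is to argue by contradiction: assume $\fF$ is a factor system for $X$, with multiplicity constant $\Delta$ as in Definition~\ref{defn:factor}(3), and produce a single vertex of $X$ lying in infinitely many distinct members of $\fF$. The two families that are forced into $\fF$ by Definition~\ref{defn:factor}(4) and that drive everything are the horizontal combinatorial hyperplanes $T_1\times\{v\}$ (for $v$ a vertex of $T_2$) and the $w_1$-geodesics; then Definition~\ref{defn:factor}(5) lets us project one member of $\fF$ onto another and remain inside $\fF$ whenever the image has infinite diameter.

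First I would record some preliminaries about the cubical structure of $X$. (i) $T_1\times T_2$ is convex in $X$: a hyperplane separating two vertices of $T_1\times T_2$ cannot be the core hyperplane of an attached strip, since $T_1\times T_2$ is connected and disjoint from each such hyperplane, hence lies in one of its halfspaces; so a combinatorial geodesic between two vertices of $T_1\times T_2$ crosses only hyperplanes of $T_1\times T_2$ and stays there. It follows that for a convex subcomplex $C\subseteq T_1\times T_2$ and a vertex $x\in T_1\times T_2$, the projection $\fg_C(x)$ computed in $X$ equals the one computed inside $T_1\times T_2$, which for $C$ of product type is computed coordinatewise. (ii) For every vertex $v$ of $T_2$, the subcomplex $T_1\times\{v\}$ is a combinatorial hyperplane of $X$: it is a combinatorial hyperplane of the hyperplane dual to the vertical edges $\{a\}\times[v,v']$, and this hyperplane meets no attached strip (no strip contains an edge of the form $\{a\}\times[v,v']$), so its carrier is $T_1\times[v,v']$ with no extra ``flaps''. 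As $T_1$ is infinite, $T_1\times\{v\}$ is nontrivial, so $T_1\times\{v\}\in\fF$ by Definition~\ref{defn:factor}(4). (iii) Every $w_1$-geodesic $\sigma$ is convex in $X$ and is a combinatorial hyperplane of $X$: if $S$ is the strip attached along $\sigma$, whose core hyperplane has carrier exactly $S$, then $\sigma$ is one of the two combinatorial hyperplanes of that core hyperplane; it is a bi-infinite line, hence nontrivial, so $\sigma\in\fF$ by Definition~\ref{defn:factor}(4).

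For the main argument, fix $n\ge 1$. By Lemma~\ref{lem:w1geo} there is a $w_1$-geodesic $\sigma_n$ with $\sigma_n\cap\Pi=\gamma_n\times\{y_n\}$ a finite path, $p_1\in\gamma_n$ and $|\gamma_n|\ge n$; since $\sigma_n$ is horizontal with $\sigma_n\cap\Pi\subseteq\ell_1\times\ell_2$, I may write $\sigma_n=\ell_n'\times\{y_n\}$ with $\ell_n'$ a bi-infinite geodesic of $T_1$, $y_n\in\ell_2$, and $\gamma_n\subseteq\ell_1\cap\ell_n'$. By (ii) and (iii) both $T_1\times\{p_2\}$ and $\sigma_n$ lie in $\fF$, and by (i) one computes $\fg_{T_1\times\{p_2\}}(\sigma_n)=\ell_n'\times\{p_2\}$, which has infinite diameter, so by Definition~\ref{defn:factor}(5) it lies in $\fF$. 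Each $\ell_n'\times\{p_2\}$ contains the vertex $(p_1,p_2)$ because $p_1\in\gamma_n\subseteq\ell_n'$. Finally, the lines $\ell_n'$ take infinitely many values: if not, each value would differ from $\ell_1$ (if $\ell_n'=\ell_1$ then $\sigma_n=\ell_1\times\{y_n\}\subseteq\Pi$ would meet $\Pi$ in an infinite set, contradicting Lemma~\ref{lem:w1geo}), hence would meet $\ell_1$ in a compact set of diameter at most some fixed $D$; but $\gamma_n\subseteq\ell_1\cap\ell_n'$ has diameter $\ge n$, forcing $n\le D$ for all $n$. Therefore $\{\,\ell_n'\times\{p_2\}:n\ge 1\,\}$ is an infinite family of members of $\fF$ all containing $(p_1,p_2)$, contradicting Definition~\ref{defn:factor}(3).

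The step I expect to demand the most care is (i)--(iii): one must inspect the local cubical structure of $X$ near the attached strips to be sure that attaching a strip neither enlarges the carriers of the relevant hyperplanes nor destroys convexity of $T_1\times T_2$, so that $T_1\times\{p_2\}$ is genuinely a combinatorial hyperplane of $X$ and projections onto it are read off coordinatewise in $T_1\times T_2$. Once that is in place the remainder --- invoking Lemma~\ref{lem:w1geo} and the elementary fact that two distinct bi-infinite geodesics in a tree share only a compact segment --- is routine. I note that the proof never uses the constant $\xi$ of Definition~\ref{defn:factor}(5): the projections produced are of infinite diameter, so (5) places them in $\fF$ unconditionally.
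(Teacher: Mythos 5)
Your proof is correct, and it follows the same core strategy as the paper's: both use Lemma~\ref{lem:w1geo} to manufacture, via Definition~\ref{defn:factor}\ref{item:proj}, infinitely many members of $\fF$ containing the vertex $(p_1,p_2)$, contradicting Definition~\ref{defn:factor}\ref{item:countF}. The difference is the choice of projection target. The paper stays entirely among the $w_1$-geodesics: it projects the geodesic $F'$ supplied by the lemma onto $F=\ell_1\times\{p_2\}$, obtaining the finite paths $\gamma\times\{p_2\}$ of length at least $n$, and must therefore take $n$ larger than $\xi$ before Definition~\ref{defn:factor}\ref{item:proj} forces membership in $\fF$. You instead project onto the horizontal subcomplex $T_1\times\{p_2\}$, obtaining the infinite lines $\ell_n'\times\{p_2\}$, so the axiom applies unconditionally; the price is the extra verification in your steps (i)--(iii) that $T_1\times\{v\}$ really is a combinatorial hyperplane of $X$ (i.e.\ that the hyperplanes dual to vertical edges do not enter the attached strips) and that gates into $T_1\times T_2$ are computed coordinatewise --- which you carry out correctly, and which the paper does not need in this form. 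One small caveat: the ``elementary fact'' you invoke at the end, that two distinct bi-infinite geodesics in a tree share only a compact segment, is false in general (two distinct lines can share a ray). Your argument survives because the finiteness you actually need, namely that $\ell_n'\cap\ell_1=\gamma_n$ is a finite path, is part of the conclusion of Lemma~\ref{lem:w1geo}; you should cite that directly rather than the general claim.
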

\begin{proof}
	Suppose for contradiction that $X$ has a factor system $\fF$.
	In $X$, there is an infinite strip glued to each $w_1$-geodesic, and there is a hyperplane that runs along the middle of each infinite strip (shown as dotted red lines in Figure \ref{fig:factor}). Hence each $w_1$-geodesic is a combinatorial hyperplane in $X$, and is an element of $\fF$ by Definition \ref{defn:factor}\ref{item:hyp}.
	In particular $F:=\ell_1\times\{p_2\}\in\fF$.
	
	Choose an integer $n\geq1$ and apply Lemma \ref{lem:w1geo}. This provides us with a $w_1$-geodesic $F'$ whose intersection with $\Pi$ is a finite path of the form $\gamma\times\{y\}\subset \ell_1\times\ell_2=\Pi$, with $p_1\in\gamma$ and $\gamma$ of length at least $n$.
	The projection $\fg_F:X\to F$ maps $\gamma\times\{y\}$ to $\gamma\times\{p_2\}$, and it maps the rest of $F'$ to the endpoints of $\gamma\times\{p_2\}$. By Definition \ref{defn:factor}\ref{item:proj}, the image $\fg_F(F')=\gamma\times\{p_2\}$ is in $\fF$ for sufficiently large $n$. But we have $(p_1,p_2)\in\fg_F(F')$ for all $n$, so we contradict Definition \ref{defn:factor}\ref{item:countF}. 
\end{proof}
	\begin{figure}[H]
	\centering
	\scalebox{1}{
		\begin{tikzpicture}[auto,node distance=2cm,
			thick,every node/.style={circle,draw,fill,inner sep=0pt,minimum size=7pt},
			every loop/.style={min distance=2cm},
			hull/.style={draw=none},
			]
			\tikzstyle{label}=[draw=none,fill=none]
			\tikzstyle{a}=[isosceles triangle,sloped,allow upside down,shift={(0,-.05)},minimum size=3pt]
			
			\begin{scope}[scale=.6]
			\draw (0,0) rectangle (10,8);
			\draw (-5,0) grid (15,-1);
			\draw (-5.6,-1)--(15.6,-1);
			\draw[blue, ultra thick] (-5.6,0)--(15.6,0);
			\draw[dashed, red, ultra thick] (-5.6,-.5)--(15.6,-.5);

			\draw (0,8) grid (10,9);
			\foreach\x in {1,2,3,4,5} \draw (-\x,8+.3*\x)--(-\x,9+.3*\x)(10+\x,8+.3*\x)--(10+\x,9+.3*\x);
			\draw (-5.6,9+.3*5.6)--(0,9)--(10,9)--(15.6,9+.3*5.6);
			\draw[blue, ultra thick] (-5.6,8+.3*5.6)--(0,8)--(10,8)--(15.6,8+.3*5.6);
			\draw[dashed, red, ultra thick](-5.6,8.5+.3*5.6)--(0,8.5)--(10,8.5)--(15.6,8.5+.3*5.6);
			
			\draw[Green, ultra thick](0,.1)--(10,.1);
			\draw[Green, ultra thick](0,7.9)--(10,7.9);
			\node at (3,0){};
			
			\node[label,blue] at (-4,.5) {$F=\ell_1\times\{p_2\}$};
			\node[label,blue] at (-5,9) {$F'$};
			\node[label,Green] at (7,.6) {$\fg_F(F')=\gamma\times\{p_2\}$};
			\node[label,Green] at (5,7.4) {$\gamma\times\{y\}$};
			\node[label] at (2,.6) {$(p_1,p_2)$};
			\node[label,red] at (-6.2,-.5) {$H$};
			\node[label,red] at (-6.2,10.3) {$H'$};
			\end{scope}			
		\end{tikzpicture}
	}
	\caption{The $w_1$-geodesics $F$ and $F'$ with their attached strips.}\label{fig:factor}
\end{figure}
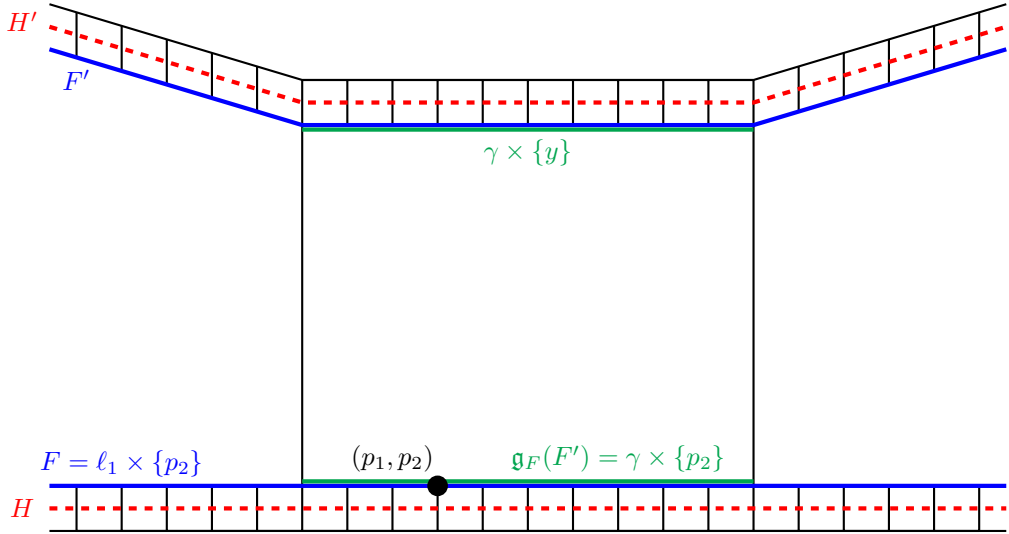

\begin{remk}\label{remk:wellseparated}
	If $H,H'$ are the hyperplanes that run along the strips glued to $F,F'$ from the above proof (see Figure \ref{fig:factor}), then the hyperplanes that are transverse to both $H$ and $H'$ are precisely the hyperplanes that cross the path $\gamma\times\{y\}$. Moreover, this collection of hyperplanes has no facing triples, so if $\gamma$ has length $L$ then $H,H'$ are $L$-well-separated but not $(L-1)$-well-separated. We can choose $H,H'$ so that $L$ is arbitrarily large, so this provides a negative answer to a question of Genevois \cite[Question 6.69 (first part)]{Genevois19}.
\end{remk}

\begin{remk}\label{remk:3conditions}
	In \cite[Theorem A]{HagenSusse20} Hagen--Susse provide three separate sufficient conditions for a cubulated group to admit a factor system.
	Since $X$ has no factor system, we know that $\Gamma\acts X$ does not satisfy any of these three conditions. We describe what these conditions are below, and outline some direct arguments for why they fail for $\G\acts X$:
	\begin{enumerate}
		\item A cubulated group $G\acts Z$ is \emph{rotational} if for each hyperplane $B$ there is a finite-index subgroup $K_B<\rm{Stab}_G(B)$ such that, for any hyperplane $A$ disjoint from $B$ and all $k\in K_B$, the carriers of $A$ and $kA$ are either equal or disjoint.
		
		For our cubulated group $\Gamma\acts X$, we can consider $B=H$ to be the hyperplane shown in Figure \ref{fig:factor}, and we can show that for any $1\neq k\in\rm{Stab}_\Gamma(H)$ there is a hyperplane $A$ disjoint from $H$ such that the carriers of $A$ and $kA$ are distinct but not disjoint.
		Indeed, for $1\neq k\in\rm{Stab}_\Gamma(H)=\langle g_1\rangle$ we know that $k(\{p_1\}\times\ell_2)\cap\Pi$ is a finite path (else $k$ would commute with some power of $g_2$ by a similar argument to the proof of Lemma \ref{lem:w1geo}, contradicting the fact that $g_1, g_2$ form an anti-torus), so there must be a vertex $x\in\ell_2$ incident to an edge $e\subset\ell_2$ such that $k(p_1,x)\in \ell_1\times\{x\}$ but $k(\{p_1\}\times e)\nsubseteq\Pi$.
		With $A$ being the hyperplane dual to $\{p_1\}\times e$, the intersection of the carriers of $A$ and $kA$ is $T_1\times\{x\}$, which is a proper subset of the carrier of $A$ (it is one of the combinatorial hyperplanes of $A$).
		
		\item A cubulated group $G\acts Z$ satisfies the \emph{weak finite height condition for hyperplanes} if the following holds for each hyperplane $A$ and its stabilizer $K=\rm{Stab}_G(A)$: If $\{g_i\}\subset G$ is an infinite set such that $K\cap\bigcap_{i\in J}K^{g_i}$ is infinite for all finite $J\subset I$, then there exist distinct $g_i,g_j$ so that $K\cap K^{g_i}=K\cap K^{g_j}$.
		
		This condition fails for our cubulated group $\Gamma\acts X$ (in fact it also fails for $\Gamma\acts T_1\times T_2$) by considering an edge $e\subset\ell_2$ and taking $A$ to the hyperplane dual to $\{p_1\}\times e$.
		Then the stabilizer $K=\rm{Stab}_\Gamma(A)$ is just the stabilizer of $e$ with respect to the action $\Gamma\acts T_2$.
		For any $i\geq1$ we know that $g_1^i(\{p_1\}\times \ell_2)\cap\Pi$ is finite (as in case (1)), or equivalently $g_1^i\ell_2\cap\ell_2\subset T_2$ is finite.
		The element $g_2$ translates along $\ell_2$, so for any power $g_2^j$ the conjugate $K^{g_2^j}$ is the $\Gamma$-stabilizer of the edge $g_2^je\subset\ell_2$.
		Thus $g_1^i\notin K^{g_2^j}$ for all sufficiently large $j\geq1$.
		Since $T_2$ is locally finite, $K$ and $K^{g_2^j}$ are commensurable in $\Gamma$ for any $j$, and $\langle g_1\rangle \cap K^{g_2^j}$ is infinite since $g_1$ fixes the vertex $p_2\in T_2$.
		Hence, we may construct increasing sequences of positive integers $(i_k)$ and $(j_k)$ such that $g_1^{i_k}$ is not in $K^{g_2^{j_k}}$ but $g_1^{i_{k+1}}$ is.
		Therefore, 
		$$K\cap K^{g_2^{j_1}}\supsetneq K\cap K^{g_2^{j_2}}\supsetneq K\cap K^{g_2^{j_3}}\supsetneq\dots$$
		is a strictly descending chain of commensurable (hence infinite) subgroups of $\Gamma$, which means the weak finite height condition for hyperplanes does not hold for $\Gamma\acts X$.
		
		\item The third condition has two parts. A cubulated group $G\acts Z$ satisfies the \emph{essential index condition} if there is a constant $\zeta$ such that for any $F\in\fF$ (where $\fF$ is the smallest collection of convex subcomplexes of $Z$ that contains $Z$, contains all combinatorial hyperplanes, and is closed under closest point projection) the $G$-stabilizer of $F$ has index at most $\zeta$ in the $G$-stabilizer of the essential core of $F$.
		A cubulated group $G\acts Z$ satisfies the \emph{Noetherian intersection of conjugates (NIC) condition on hyperplane stabilizers} if the following holds for each hyperplane stabilizer $K$: Given $\{g_i\}\subset G$ so that $K_n=K\cap\bigcap_{i=0}^n K^{g_i}$ is infinite for all $n$, there exists $l$ so that $K_n$ and $K_l$ are commensurable for $n\geq l$.
		
		Our cubulated group $\G\acts X$ satisfies the NIC condition for hyperplane stabilizers (because all hyperplane stabilizers are either cyclic or commensurated) but fails the essential index condition as follows.
		Suppose the the geodesic $\ell_1\times\{p_2\}$ (from Figure \ref{fig:factor}) crosses the hyperplanes $H_1,H_2,H_3,\dots$ respectively when starting at $(p_1,p_2)$ and moving in the direction of translation of $g_1$.
		Let $F_i$ be the combinatorial hyperplane of $H_i$ that is on the same side of $H_i$ as $(p_1,p_2)$.
		Let $\fF$ be as described above for our cubulated group $\Gamma\acts X$, and consider the subcomplexes $\fg_{F_1}(F_i)\in\fF$.
		Given an integer $n\geq1$, as in the proof of Theorem \ref{thm:Xnofactor} we can apply Lemma \ref{lem:w1geo} to obtain a $w_1$-geodesic $F'$ whose intersection with $\Pi$ is a finite path of the form $\gamma\times\{y\}\subset \ell_1\times\ell_2=\Pi$, with $p_1\in\gamma$ and $\gamma$ of length at least $n$.
		Moreover, it follows from the proof of Lemma \ref{lem:w1geo} that we can take $(p_1,y)=g_2^k(p_1,p_2)$ for some $k\neq0$, and we may assume that the subpath of $\gamma\times\{y\}$ that starts at $(p_1,y)$ and moves in the direction of translation of $g_2^kg_1g_2^{-k}$ has length at least $n$.
		Let $e,e'$ be the edges in $F_1$ incident at vertices $(p_1,p_2),(p_1,y)$ respectively that cross the hyperplanes $H$, $H'$ respectively (again from Figure \ref{fig:factor}).
		Note that $g_2^ke=e'$.
		One can show that $e\subset\fg_{F_1}(F_i)$ for all $i$.
		On the other hand, $e'\subset\fg_{F_1}(F_i)$ for $1\leq i\leq n$ but for only finitely many $i$ in total.
		One can then argue that $g_2^k$ is in the $\Gamma$-stabilizer of $\fg_{F_1}(F_i)$ for $1\leq i\leq n$ but for only finitely many $i$ in total.
		This can be done for any $n\geq1$, so the $\Gamma$-stabilizers of the $\fg_{F_1}(F_i)$ is a descending sequence of subgroups that never terminates.
		Meanwhile, all the $\fg_{F_1}(F_i)$ have essential core $\{p_1\}\times T_2$, so the essential index condition fails.
		
	\end{enumerate}
\end{remk}

\medskip
\section{Example with non-acylindrical action on the contact graph}\label{sec:nonacyl}

We now construct a free cocompact action of a group $\La$ on a CAT(0) cube complex $Y$, such that the induced action on the contact graph $\cC Y$ is not acylindrical. We start with the action of $\G$ on $T_1\times T_2$ from Section \ref{sec:nofactor}, with elements $g_1,g_2\in\G$ forming an anti-torus. We retain all the notation from Section \ref{sec:nofactor}, so in particular $g_1$ translates along an axis $\ell_1\times\{p_2\}\subset T_1\times T_2$ which descends to a loop in $(T_1\times T_2)/\G$ labeled by $w_1$.
Next we attach an annulus to $(T_1\times T_2)/\G$ by gluing both its boundary components (with matching orientations) along the loop labeled by $w_1$.
We make this a non-positively curved cube complex by subdividing the annulus into $|w_1|$ squares, and we let $Y$ be the universal cover.
Unlike for the construction of $X$ in Section \ref{sec:nofactor}, we have glued \emph{both} boundary components of the annulus to $(T_1\times T_2)/\G$, so the gluing changes the fundamental group from $\G$ to the HNN extension
\begin{equation}\label{Lapres}
\La:=\G *_{\langle g_1\rangle}=\langle \G,t\mid tg_1 t^{-1}=g_1\rangle.
\end{equation}
And $\La$ acts on $Y$ by deck transformations.
Observe that $Y$ has the structure of a tree of spaces, where the vertex spaces are copies of $T_1\times T_2$, and the edge spaces are infinite strips.
The edge labeling of $(T_1\times T_2)/\G$ induces an edge labeling of $Y/\La$ (apart from the edges that cross the annulus) and we can lift this to an edge labeling of $Y$.
As in Section \ref{sec:nofactor}, lifts of the $w_1$-loop in $Y/\La$ to $Y$ will be referred to as \emph{$w_1$-geodesics}.
Each $w_1$-geodesic in $Y$ is attached to two edge spaces since the $w_1$-loop in $Y/\La$ is attached to both boundary components of the annulus.

\begin{thm}\label{thm:cCYnotacyl}
	The action of $\La$ on the contact graph $\cC Y$ is not acylindrical.
\end{thm}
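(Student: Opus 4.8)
The plan is to contradict the definition of acylindricity directly. I would fix a constant $\epsilon$ once and for all, and then, given arbitrary $R$ and $N$, produce two hyperplanes $U,W$ of $Y$ with $d_{\cC Y}(U,W)\geq R$ together with more than $N$ elements of $\La$, each of which moves both $U$ and $W$ a distance at most $\epsilon$ in $\cC Y$. The two hyperplanes and the list of elements will depend on $R$ and $N$, and (necessarily) the elements will not form a subgroup: indeed an infinite subgroup moving two points a bounded amount would be elliptic on $\cC Y$ and so coarsely fix a bounded set, forcing the two points to be close, so the violation of acylindricity has to come from finite families whose size grows.

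Before the construction I would record two features of the coarse geometry of $\cC Y$. First, the contact graph of each vertex space $T_1\times T_2$ has diameter two, since any two hyperplanes of a product of trees either cross or share a common transversal; as contacts inside a vertex space remain contacts in $Y$, any two hyperplanes meeting a common vertex space, and hence — chaining along the Bass–Serre tree $\cT$ — any two hyperplanes meeting vertex spaces at bounded $\cT$-distance, lie at bounded distance in $\cC Y$. Second, the hyperplane running down the middle of a strip meets the subcomplex consisting of that strip together with everything its two boundary $w_1$-geodesics meet (this is exactly the phenomenon used in Remark \ref{remk:wellseparated}); in particular many hyperplanes that are far apart in $\cT$ become close in $\cC Y$, so that $\cC Y$, although unbounded, is much more collapsed than $\cT$. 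I would use these two facts both to certify that $U$ and $W$ are far apart and to produce the small-displacement estimates.

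The engine is the anti-torus, via Lemma \ref{lem:w1geo}: for every $L$ there is a $w_1$-geodesic $F'$ fellow-travelling the flat $\Pi$ along a finite path $\gamma\times\{y\}$ of length $L$ through $(p_1,p_2)$, and hence (since in $Y$ both boundary circles of the annulus are glued to the $w_1$-loop) a configuration of two strips joined in $\cC Y$ by a "bigon" of width exactly $L$. Because of the HNN structure $\La=\G\ast_{\langle g_1\rangle}$, the tree $\cT$ carries the axis $\mathcal L$ of the stable letter $t$, with $g_1$ fixing $\mathcal L$ pointwise and $\langle g_1,t\rangle\cong\Z^2$; more generally $u\langle g_1,t\rangle u^{-1}\cong\Z^2$ for any conjugate. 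For a given $L$ let $u\in\G$ be the element with $F'=u(\ell_1\times\{p_2\})$. The far-apart hyperplanes $U,W$ and the list of more than $N$ elements will be built from the two commuting pairs $g_1,t$ and $ug_1u^{-1},utu^{-1}$, and from how they interact along the length-$L$ segment $\gamma$: elements of one $\Z^2$ that translate a long portion of the relevant strip agree, up to a bounded error, with elements of the other $\Z^2$ along $\gamma$, so that a common "coarse almost-stabilizer" of $U$ and $W$ of size growing linearly in $L$ appears, all within a fixed $\epsilon$. It is precisely the non-periodicity of $\Pi$ — that is, the full anti-torus hypothesis and not merely long fellow-travelling — that keeps $\epsilon$ bounded as $L\to\infty$. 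Choosing $L$ large enough in terms of $R$ and $N$ then finishes the proof of Theorem \ref{thm:cCYnotacyl}.

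The main obstacle is the control of the $\cC Y$-metric in the first and last steps: because every strip contributes a hyperplane with large carrier, one has to argue carefully that the hyperplanes coming from $F$ and $F'$ are genuinely $\cC Y$-far apart while still being coarsely fixed by the advertised elements — once this is in place the rest is bookkeeping with the tree-of-spaces structure. I would also point out why the more familiar route to non-acylindricity is closed off: any element of $\La$ acting loxodromically on $\cC Y$ lies outside every conjugate of $\langle g_1,t\rangle$ (those are elliptic on $\cC Y$), so its axis in $\cT$ has edge-stabilizers with trivial common intersection and hence the element has virtually cyclic centralizer; thus there is no loxodromic element with non-elementary centralizer, and one really does need the anti-torus rather than a single $\Z^2$.
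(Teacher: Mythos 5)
Your overall strategy (for each $R,N$, exhibit two hyperplanes at $\cC Y$-distance at least $R$ together with more than $N$ elements displacing both by a fixed $\epsilon$) has the right shape, and you correctly identify Lemma \ref{lem:w1geo} as the engine. But the configuration you propose cannot produce the two far-apart hyperplanes, and the difficulty you flag at the end as something to be ``argued carefully'' is in fact fatal to your choice of $U$ and $W$. If $U,W$ are the hyperplanes running along the strips attached to $F$ and $F'$, then (exactly as in Remark \ref{remk:wellseparated}) both of them cross every hyperplane dual to an edge of the overlap $\gamma\times\{y\}$, so $d_{\cC Y}(U,W)=2$ \emph{no matter how large $L$ is}. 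More generally, the entire ``territories'' of your two commuting pairs $\langle g_1,t\rangle$ and $u\langle g_1,t\rangle u^{-1}$ lie within uniformly bounded $\cC Y$-distance of each other: a hyperplane dual to an edge of $F$ extends across every strip in the cylinder of $\langle g_1\rangle$ and hence crosses all of its strip hyperplanes, likewise for the conjugate cylinder, and the hyperplanes dual to edges of $\gamma$ do both at once. So nothing built from these two copies of $\Z^2$ alone can be far apart in $\cC Y$; your construction yields a large family of elements coarsely fixing two points that are at distance $2$, which says nothing against acylindricity.

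The missing ingredient is an element that moves you transversally to this picture. The paper takes the $w_1$-geodesic $F''$ on the far side of the strip attached to $F'$ and chooses $g\in\La$ with $gF=F''$ and $g(p_1,p_2)$ shifted along $F''$ by some $0<r\leq|w_1|$. Iterating $g$ yields the staircase of Figure \ref{fig:staircase}: a hyperplane can cross more than one of the strip hyperplanes $g^iH$ only by crossing a $\langle g\rangle$-translate of $\fg_F(F')$, so it crosses at most $M=\ceil{L/r}+1$ of them. This single count gives both halves of the argument. For $0\leq i<M$ the hyperplane dual to the last edge of $\fg_F(F')$ crosses both $H$ and $g^iH$, so the finite set $\{1,g,\dots,g^{M-1}\}$ displaces each of $H$ and $g^pH$ by at most $2$ in $\cC Y$; while for large $p$ the hyperplanes $g^iH$, $0<i<p$, separate $H$ from $g^pH$, and each vertex of a $\cC Y$-geodesic accounts for at most $M$ of them, whence $d_{\cC Y}(H,g^pH)\geq p/M$. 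It is this separation count --- not the diameter-two observation about vertex spaces, which only yields upper bounds on $\cC Y$-distances --- that certifies the two hyperplanes are genuinely far apart. Without an element playing the role of $g$ and without a lower-bound mechanism of this kind, your outline does not reach a contradiction with acylindricity.
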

\begin{proof}
	We will show that for any $R,N>0$ there exist $H,H'\in\cC Y$ such that $d_{\cC Y}(H,H')\geq R$ and there are more than $N$ elements $g\in \La$ satisfying $d_{\cC Y}(H,gH), d_{\cC Y}(H',gH')\leq2$.
	
Consider a vertex space in $Y$, and identify it with $T_1\times T_2$. Given an integer $n\geq 1$, we can choose $w_1$-geodesics $F,F'$ as in the proof of Theorem \ref{thm:Xnofactor} such that the projection $\fg_F(F')$ is a finite path of length at least $n$ that contains the vertex $(p_1,p_2)$.
Now take one of the edge spaces in $Y$ glued to $F'$, and let $F''$ be the geodesic on the other side of the edge space.
$F''$ is in a different vertex space, but it is again a $w_1$-geodesic, so it contains vertices in the orbit $\La\cdot(p_1,p_2)$. Moreover, the spacing between these vertices is at most $|w_1|$, so we can choose $g_n\in\La$ such that $g_n(p_1,p_2)$ lies on $F''$ but is shifted to the right relative to $(p_1,p_2)$ by an integer $0<r\leq|w_1|$, as shown in Figure \ref{fig:g}.

	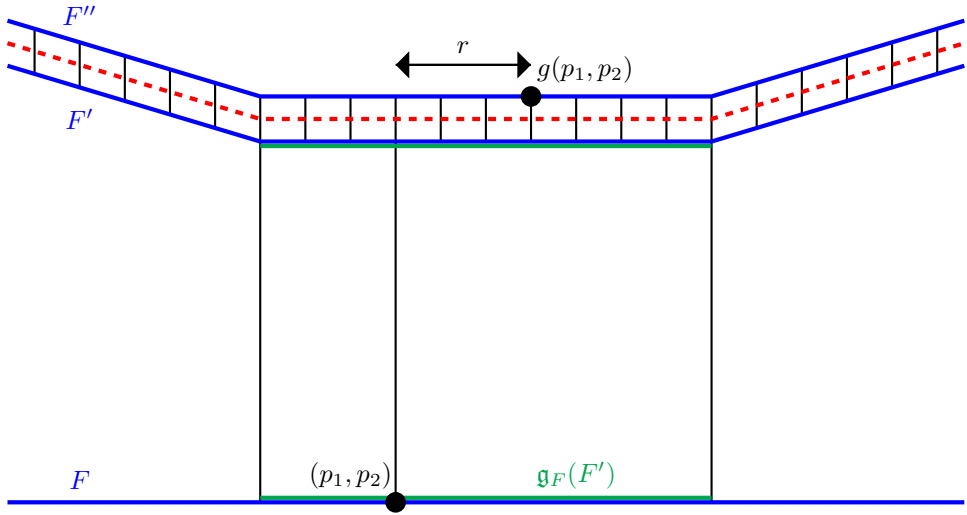
\begin{figure}[H]
	\centering
	\scalebox{1}{
		\begin{tikzpicture}[auto,node distance=2cm,
			thick,every node/.style={circle,draw,fill,inner sep=0pt,minimum size=7pt},
			every loop/.style={min distance=2cm},
			hull/.style={draw=none},
			]
			\tikzstyle{label}=[draw=none,fill=none]
			\tikzstyle{a}=[isosceles triangle,sloped,allow upside down,shift={(0,-.05)},minimum size=3pt]
			
			\begin{scope}[scale=.6]
				\draw (0,0) rectangle (10,8);
				\draw (3,0)--(3,8);
				\draw[blue, ultra thick] (-5.6,0)--(15.6,0);

				\draw (0,8) grid (10,9);
				\foreach\x in {1,2,3,4,5} \draw (-\x,8+.3*\x)--(-\x,9+.3*\x)(10+\x,8+.3*\x)--(10+\x,9+.3*\x);
				\draw[blue, ultra thick] (-5.6,9+.3*5.6)--(0,9)--(10,9)--(15.6,9+.3*5.6);
				\draw[blue, ultra thick] (-5.6,8+.3*5.6)--(0,8)--(10,8)--(15.6,8+.3*5.6);
				\draw[dashed, red, ultra thick](-5.6,8.5+.3*5.6)--(0,8.5)--(10,8.5)--(15.6,8.5+.3*5.6);
				
				\draw[Green, ultra thick](0,.1)--(10,.1);
				\draw[Green, ultra thick](0,7.9)--(10,7.9);
				\node at (3,0){};
				\node at (6,9){};
				\draw[-triangle 90] (3,9.7)--(6,9.7);
				\draw[-triangle 90] (6,9.7)--(3,9.7);
				
				\node[label,blue] at (-4,.5) {$F$};
				\node[label,blue] at (-4,8.5) {$F'$};
				
				\node[label,blue] at (-4,10.8) {$F''$};
				\node[label,Green] at (7,.6) {$\fg_F(F')$};
				\node[label] at (2,.6) {$(p_1,p_2)$};
				\node[label] at (7.2,9.6) {$g_n(p_1,p_2)$};
				\node[label] at (4.5,10.1) {$r$};
			\end{scope}			
		\end{tikzpicture}
	}
	\caption{The positioning of $g_n(p_1,p_2)$.}\label{fig:g}
\end{figure}

Note that $g_nF=F''$. Furthermore, applying powers of $g_n$ to $F$ and $F'$ produces a staircase-like picture as shown in Figure \ref{fig:staircase}, where each step has depth $r$.
Each pair $g_n^iF,g_n^iF'$ lies in a different vertex space of $Y$.
If $H$ is the hyperplane that runs along the edge space between $g_n^{-1}F'$ and $F$, then the hyperplanes $g_n^i H$ run along a sequence of edge spaces that connect the aforementioned sequence of vertex spaces.

	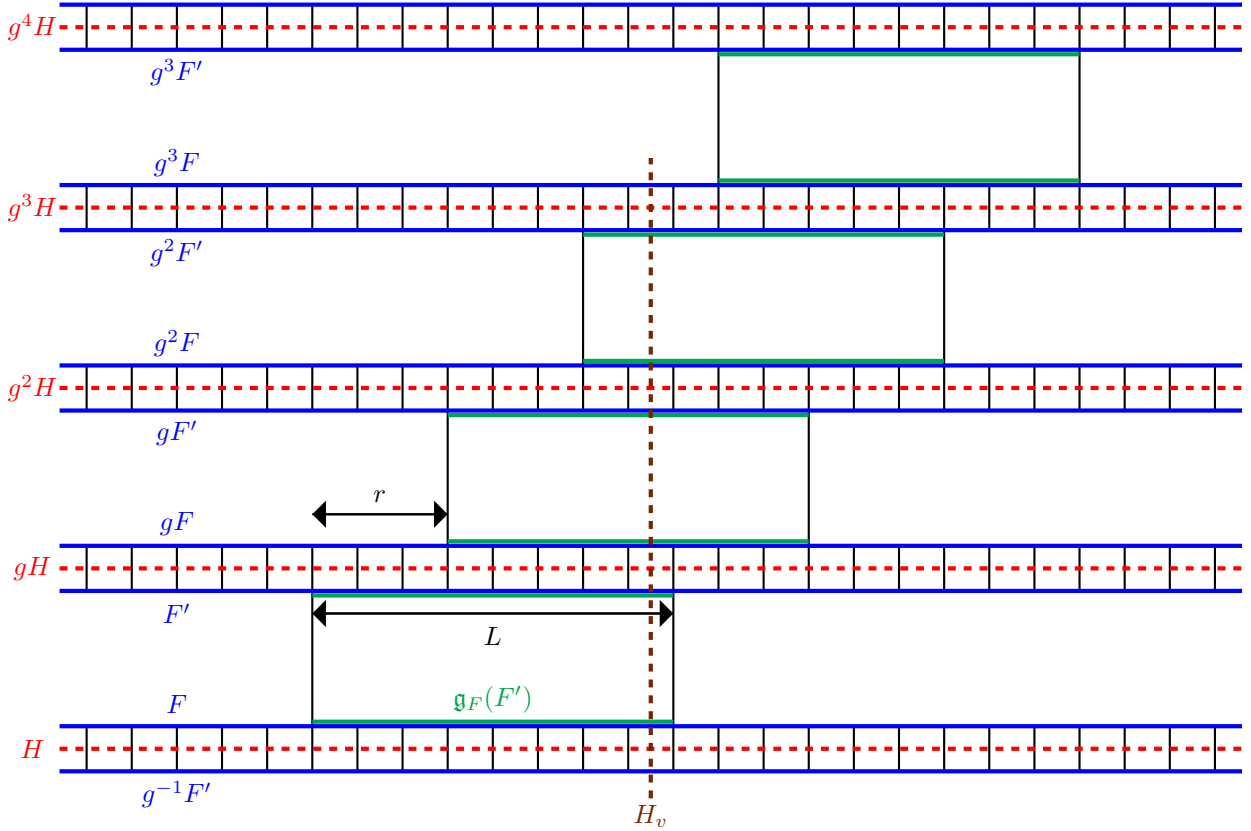
\begin{figure}[H]
	\centering
	\scalebox{1}{
		\begin{tikzpicture}[auto,node distance=2cm,
			thick,every node/.style={circle,draw,fill,inner sep=0pt,minimum size=7pt},
			every loop/.style={min distance=2cm},
			hull/.style={draw=none},
			]
			\tikzstyle{label}=[draw=none,fill=none]
			\tikzstyle{a}=[isosceles triangle,sloped,allow upside down,shift={(0,-.05)},minimum size=3pt]
						
			\begin{scope}[scale=.6]
				
				\draw (-5,-1) grid (20,0);
				\draw[dashed, red, ultra thick](-5.6,-.5)--(20.6,-.5);
				\draw[blue, ultra thick] (-5.6,-1)--(20.6,-1);
				\foreach\x in {0,1,2,3}
				{\draw (3*\x,4*\x) rectangle (3*\x+8,4*\x+3);
				\draw (-5,4*\x+3) grid (20,4*\x+4);
				\draw[dashed, red, ultra thick](-5.6,4*\x+3.5)--(20.6,4*\x+3.5);
				\draw[blue, ultra thick] (-5.6,4*\x)--(20.6,4*\x)(-5.6,4*\x+3)--(20.6,4*\x+3);
				\draw[Green, ultra thick](3*\x,4*\x+.1)--(3*\x+8,4*\x+.1)(3*\x,4*\x+2.9)--(3*\x+8,4*\x+2.9);
			}
				
			\draw[blue, ultra thick] (-5.6,4*4)--(20.6,4*4);
			
			\node[label,blue] at (-3,.5) {$F$};
			\node[label,blue] at (-3,2.5) {$F'$};
			\node[label,blue] at (-3,-1.5) {$g_n^{-1} F'$};
			\node[label,Green] at (4,.6) {$\fg_F(F')$};
			\node[label,red] at (-6.2,-.5) {$H$};
			\node[label,red] at (-6.2,3.5) {$g_nH$};
			
			\node[label,blue] at (-3,4.5) {$g_nF$};
			\node[label,blue] at (-3,6.5) {$g_nF'$};
			\node[label,red] at (-6.2,7.5) {$g_n^2H$};
			\node[label,red] at (-6.2,11.5) {$g_n^3H$};
			\node[label,red] at (-6.2,15.5) {$g_n^4H$};
			
			\foreach\x in {2,3}
			{\node[label,blue] at (-3,4*\x+.5) {$g_n^\x F$};
				\node[label,blue] at (-3,4*\x+2.5) {$g_n^\x F'$};
			}
		
		\draw[-triangle 90] (0,2.5)--(8,2.5);
		\draw[-triangle 90] (8,2.5)--(0,2.5);
		\node[label] at (4,2) {$L$};
		\draw[Brown, dashed, ultra thick] (7.5,-1.6)--(7.5,12.6);
		\node[label,Brown] at (7.5,-2) {$H_v$};
		\draw[-triangle 90] (0,4.7)--(3,4.7);
		\draw[-triangle 90] (3,4.7)--(0,4.7);
		\node[label] at (1.5,5.1) {$r$};	
		
			\end{scope}							
		\end{tikzpicture}
	}
	\caption{The arrangement of the $w_1$-geodesics $g_n^i F$ and $g_n^i F'$.}\label{fig:staircase}
\end{figure}

Suppose that the path $\fg_F(F')$ has length $L$ (remembering that $L\geq n$).
If a hyperplane intersects more than one of the hyperplanes $g_n^i H$ then it must cross some $\langle g_n\rangle$-translate of the projection $\fg_F(F')$.
The hyperplane $H_v$ that is dual to the last edge of $\fg_F(F')$ intersects exactly $M:=\ceil{L/r}+1$ of the hyperplanes $g_n^i H$, and no other hyperplane intersects more of them.
In particular, for $1\leq i< M$, $H_v$ intersects $H$ and $g_n^iH$, so the distance between $H$ and $g_n^iH$ in the contact graph $\cC Y$ is
\begin{equation}\label{dcC2}
d_{\cC Y}(H,g_n^i H)=2.
\end{equation}
On the other hand, $d_{\cC Y}(H,g_n^p H)\to\infty$ as $p\to\infty$. Indeed, suppose the geodesic in $\cC Y$ from $H$ to $g_n^p H$ consists of hyperplanes $H=H_0,H_1,...,H_d=g_n^p H$. We know that $H,g_n^pH$ are separated by the hyperplanes $g_n^i H$ for each $0< i< p$, so each of these $g_n^i H$ either equals one of the $H_j$ or intersects one of the $H_j$.
But we know from earlier that each $H_j$ intersects at most $M$ of the $g_n^i H$, hence
$$d=d_{\cC Y}(H,g_n^p H)\geq\frac{p}{M}.$$
Putting $H'=g_n^p H$, we have $d_{\cC Y}(H,H')\geq R$ provided $p\geq MR$. But (\ref{dcC2}) implies that $H$ and $H'$ are both moved at most distance 2 in $\cC Y$ by the elements $\{1,g_n,g_n^2,...,g_n^{M-1}\}\subset\La$. And $M=\ceil{L/r}+1> n/r$, so if we choose $n\geq rN$ then we get more than $N$ elements $g\in \La$ satisfying $d_{\cC Y}(H,gH), d_{\cC Y}(H',gH')\leq2$, as required. So we conclude that the action of $\La$ on $\cC Y$ is not acylindrical.
\end{proof}

\begin{remk}
	The staircase in Figure \ref{fig:staircase} is not a staircase as defined in \cite{HagenSusse20} (visually speaking, the former looks like it has empty space below the staircase whereas the latter does not).
	However, they are both obstructions to the existence of a factor system.
	Indeed, the existence of staircases as in Figure \ref{fig:staircase} for arbitrarily large ratios $L/r$ is the key to proving Theorem \ref{thm:cCYnotacyl}, which in turn implies that $Y$ has no factor system.
	Meanwhile, the existence of just a single convex staircase in the sense of \cite{HagenSusse20} rules out the possibility of a factor system.
	It remains an open question whether any cubulation of a group contains a convex staircase in the sense of \cite{HagenSusse20}.
\end{remk}

\begin{remk}\label{remk:deltaK}
In \cite{Genevois19} Genevois defines a metric $\delta_K$ for a CAT(0) cube complex $X$ as the maximal number of pairwise $K$-well-separated hyperplanes separating two given vertices. The space $(X,\delta_K)$ is hyperbolic for all $K$, and it is quasi-isometric to the contact graph for $K=0$. 
Moreover, if $G\acts X$ is a cubulated group, then the induced action on $(X,\delta_K)$ is non-uniformly acylindrical for all $K$.
However, for our cubulated group $\La\acts Y$, the action of $\La$ on $(Y,\delta_K)$ is not acylindrical for any $K$.
The argument is similar to that in the proof of Theorem \ref{thm:cCYnotacyl}: you can define the element $g_n\in\La$ in the same way, and show that $g_n$ acts loxodromically on $(Y,\delta_K)$, and you can exhibit points on the axis of $g_n$ that are arbitrarily far apart but are moved at most distance 2 by many powers of $g_n$ -- with the number of such powers tending to infinity as $n\to\infty$.
\end{remk}

As discussed in the introduction, we do not know whether $\La$ is a HHG.
A possible strategy to prove that $\La$ is not a HHG would be to show that $\La$ has no largest acylindrical action (definition below), because all HHG's possess a largest acylindrical action \cite{AbbottBehrstockDurham21}.

\begin{defn}\label{defn:largest}
	Let $G$ be a group that acts on metric spaces $R$ and $S$. We say that $G\acts R$ is \emph{dominated} by $G\acts S$, written $G\acts R\preceq G\acts S$, if there exist $r\in R$, $s\in S$ and a constant $C$ such that
	$$d_R(r,gr)\leq C d_S(s,gs)+C$$
	for all $g\in G$.
	The actions $G\acts R$ and $G\acts S$ are \emph{equivalent} if $G\acts R\preceq G\acts S$ and $G\acts S\preceq G\acts R$.
	We denote the equivalence class by $[G\acts R]$.
	The relation $\preceq$ defines a partial order on the set of equivalence classes of actions of $G$ on metric spaces.
	The \emph{largest acylindrical action} of $G$ (if it exists) is the largest element of the set of equivalence classes of cobounded acylindrical actions of $G$ on hyperbolic metric spaces. 
\end{defn}

Alas, we show below in Proposition \ref{prop:acyl} that $\La$ does have a largest acylindrical action, which is defined as follows.
Let $T$ be the Bass-Serre tree for the splitting $\La=\G*_{\langle g_1\rangle}$. We say that edges $e_1,e_2\in ET$ are \emph{equivalent} if the stabilizers $\La_{e_1},\La_{e_2}$ are commensurable. Each equivalence class defines a subtree of $T$ called a \emph{cylinder}.
The \emph{tree of cylinders} $T_c$ is the bipartite tree with vertex set $V_0T_c \sqcup V_1T_c$, where $V_0T_c$ are the vertices of $T$ and $V_1T_c$ is the set of cylinders.
The edges of $T_c$ are of the form $(v, C)$ where $v$ is a vertex in $T$ that lies in the cylinder $C \subset T$.
The action of $\La$ on $T$ induces an action on $T_c$.

To help us prove Proposition \ref{prop:acyl} we will use the following lemma, which is equivalent to \cite[Corollary 4.14]{AbbottBalasubramanyaOsin19}.

\begin{lem}\label{lem:dominate}
	Let $G$ act cocompactly on a connected graph $\Delta$ and let $G\acts R$ be another cobounded action on a metric space. If the vertex stabilizers of $\Delta$ have bounded orbits in $R$, then $G\acts R\preceq G\acts \Delta$.
\end{lem}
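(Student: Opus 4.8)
The plan is to verify the defining inequality of $\preceq$ from Definition~\ref{defn:largest} directly: taking $s$ to be a suitably chosen vertex $v\in\Delta$ and letting $r\in R$ be an arbitrary point, I will produce a constant $C$ with $d_R(r,gr)\le C\,d_\Delta(v,gv)+C$ for all $g\in G$. This is a Milnor--\v{S}varc-type estimate; the only wrinkle is that $G$ need not be finitely generated and vertex stabilizers need not be finite, so stabilizer elements must be tracked separately and controlled using the hypothesis that they act with bounded orbits on $R$.

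First I would extract the two finiteness consequences of cocompactness: $G\backslash\Delta$ is a finite graph, so $\Delta$ has finitely many $G$-orbits of vertices, with representatives $v=u_1,\dots,u_m$ say, and finitely many $G$-orbits of edges; counting $G$-orbits of pairs (edge, incident vertex) then shows that for each vertex $w$ there are only finitely many $G_w$-orbits of edges incident to $w$. Fixing a representative edge for each $G_{u_j}$-orbit of edges at $u_j$ (over all $j$), and for each such edge an element of $G$ carrying its far endpoint onto the appropriate vertex representative, produces a \emph{finite} subset $S\subseteq G$.

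Now, given $g\in G$, take a $\Delta$-geodesic $v=v_0,v_1,\dots,v_n=gv$, so $n=d_\Delta(v,gv)$, and unfold it inductively. Suppose $v_{i-1}=a_{i-1}u_{j(i-1)}$, starting from $a_0=1$, $j(0)=1$. Since $v_i$ is adjacent to $v_{i-1}$, the edge $\{u_{j(i-1)},a_{i-1}^{-1}v_i\}$ is incident to $u_{j(i-1)}$, so there is $c_i\in G_{u_{j(i-1)}}$ carrying it onto a chosen representative edge, and then $d_i\in S$ carrying the far endpoint onto a vertex representative $u_{j(i)}$; this gives $v_i=a_{i-1}c_id_i\,u_{j(i)}$, and we set $a_i:=a_{i-1}c_id_i$. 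At the end $gu_1=gv=v_n=a_nu_{j(n)}$ forces $j(n)=1$ and $g=a_nz$ with $z:=a_n^{-1}g\in G_v$, so that
\[
g=c_1d_1c_2d_2\cdots c_nd_n\,z,\qquad c_i\in G_{u_{j(i-1)}},\quad d_i\in S,\quad z\in G_v .
\]
Telescoping this product and using that $G$ acts on $R$ by isometries,
\[
d_R(r,gr)\ \le\ \sum_{i=1}^{n}\bigl(d_R(r,c_ir)+d_R(r,d_ir)\bigr)+d_R(r,zr)\ \le\ n(\kappa+\lambda)+\kappa ,
\]
where $\lambda:=\max_{s\in S}d_R(r,sr)<\infty$ since $S$ is finite, and $\kappa:=\max_{1\le j\le m}\diam_R(G_{u_j}\cdot r)<\infty$ since there are finitely many vertex orbits and each $G_{u_j}$ has bounded orbits in $R$ by hypothesis. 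Taking $C:=\kappa+\lambda$ yields $d_R(r,gr)\le C\,d_\Delta(v,gv)+C$, i.e. $G\acts R\preceq G\acts\Delta$.

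The step that needs the most care is the inductive unfolding: one must check that every edge of the geodesic is carried into a single \emph{fixed finite} family of model edges at the cost of one vertex-stabilizer element and one element of $S$, so that $g$ is expressed as a product of at most $2n+1$ factors, each lying either in one of the finitely many vertex stabilizers or in $S$. Once that bookkeeping is in place, the rest is just the triangle inequality together with the two finiteness statements from cocompactness. (Note that coboundedness of $G\acts R$ is not actually needed for this direction; it is merely part of the ambient setup in \cite{AbbottBalasubramanyaOsin19}.)
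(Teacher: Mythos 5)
Your proof is correct. The paper itself does not prove this lemma at all: it simply cites \cite[Corollary 4.14]{AbbottBalasubramanyaOsin19} and asserts the two statements are equivalent. You instead give a self-contained Milnor--\v{S}varc-type argument: decompose an arbitrary $g$ along a $\Delta$-geodesic from $v$ to $gv$ as a product of at most $2n+1$ factors, each lying in one of finitely many vertex stabilizers or in a fixed finite set $S$, then telescope with the triangle inequality and bound each factor's displacement of $r$ using the bounded-orbit hypothesis (for stabilizer elements) and finiteness of $S$. The finiteness bookkeeping is right: cocompactness gives finitely many orbits of vertices and of flags (edge, incident vertex), and the injection from $G_w$-orbits of edges at $w$ into $G$-orbits of flags gives local finiteness of the orbit count at each vertex. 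The only blemish is an inverse-direction slip in the unfolding step ($c_i$ carrying the edge \emph{onto} the representative and $d_i$ carrying the far endpoint \emph{onto} $u_{j(i)}$ actually yields $v_i=a_{i-1}c_i^{-1}d_i^{-1}u_{j(i)}$), but this is harmless since stabilizers are closed under inversion and $S$ may be replaced by $S\cup S^{-1}$. What the two routes buy: the citation keeps the paper short and places the lemma in the general framework of \cite{AbbottBalasubramanyaOsin19}'s poset of hyperbolic structures, whereas your argument is elementary, requires no hyperbolicity or acylindricity, and makes visible that coboundedness of $G\acts R$ is indeed not used, only that the action is by isometries.
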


\begin{prop}\label{prop:acyl}
	The largest acylindrical action of $\La$ is its action on the tree of cylinders $\La\acts T_c$.
\end{prop}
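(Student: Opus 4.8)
\noindent The plan is to verify that $\La\acts T_c$ is itself a cobounded acylindrical action on a hyperbolic space, and then to use Lemma \ref{lem:dominate} to show that every cobounded acylindrical action of $\La$ on a hyperbolic space is dominated by it; since a largest element of a partial order is unique, this identifies $\La\acts T_c$ as the largest acylindrical action. The tree $T_c$ is of course hyperbolic, and the action $\La\acts T_c$ is cocompact, hence cobounded: the splitting $\La=\G *_{\langle g_1\rangle}$ has a single orbit of edges in the Bass--Serre tree $T$, hence a single orbit of cylinders, and if $e$ denotes the edge with $\La_e=\langle g_1\rangle$ and $C$ its cylinder, then $\La_C$ acts transitively on the edges of $C$ (any two edges of $C$ differ by an element of $\La$, which must then preserve $C$ since cylinders are disjoint), and, since $t\in\La_C$ carries one endpoint of $e$ to the other, also on the vertices of $C$; thus $T_c$ has one orbit each of type-$0$ vertices, type-$1$ vertices, and edges.

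For acylindricity, consider a geodesic $v_0-C_0-v_1-C_1-v_2$ of length $4$ in $T_c$ whose endpoints $v_0,v_1,v_2$ are tree-vertices and $C_0,C_1$ are cylinders, and suppose $h\in\La$ fixes it pointwise. Since $v_0,v_1\in C_0$, the element $h$ fixes the $T$-geodesic $[v_0,v_1]_T\subseteq C_0$, hence fixes some edge $f_0$ of $C_0$; likewise $h$ fixes some edge $f_1$ of $C_1$. Then $h\in\La_{f_0}\cap\La_{f_1}$, where $\La_{f_0}$ and $\La_{f_1}$ are infinite cyclic. As $\La$ is torsion-free --- being an HNN extension of the torsion-free group $\G$ over an infinite cyclic subgroup --- if $h\neq1$ then $\langle h\rangle$ is infinite, so $\La_{f_0}$ and $\La_{f_1}$ are commensurable, forcing $C_0=C_1$, a contradiction. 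Hence $h=1$. Since $T_c$ is bipartite, every geodesic of length $5$ in $T_c$ contains a length-$4$ subgeodesic with both endpoints tree-vertices, so $\La\acts T_c$ is $5$-acylindrical, hence acylindrical. (The action is also non-elementary, as $T_c$ is unbounded and $\La$ is not virtually cyclic, so $\La$ is in fact acylindrically hyperbolic.)

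For maximality, let $\La\acts R$ be an arbitrary cobounded acylindrical action on a hyperbolic space. By Lemma \ref{lem:dominate}, applied to the cocompact action of $\La$ on the connected graph $T_c$, it suffices to show that every vertex stabilizer of $T_c$ has bounded orbits in $R$. The type-$0$ stabilizers are the conjugates of $\G$: restricting $\La\acts R$ to such a conjugate gives an acylindrical action on a hyperbolic space, and since $\G$ is neither virtually cyclic nor acylindrically hyperbolic --- the latter because $\G$ is an irreducible lattice in a product of trees --- the trichotomy for acylindrical actions on hyperbolic spaces forces bounded orbits. A type-$1$ stabilizer is a cylinder stabilizer $\La_C$: it contains $\langle g_1\rangle$ as an infinite subgroup that is commensurated (because $\La_C$ permutes the edges of $C$, which have pairwise commensurable stabilizers) and of infinite index (since $\La_C\supseteq\langle g_1,t\rangle\cong\Z^2$, using $tg_1t^{-1}=g_1$ and Britton's lemma). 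A group possessing an infinite commensurated subgroup of infinite index is not acylindrically hyperbolic, and $\La_C$ is not virtually cyclic, so the same trichotomy again gives bounded orbits in $R$. Therefore $\La\acts R\preceq\La\acts T_c$, and $\La\acts T_c$ is the largest acylindrical action.

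The step I expect to be the main obstacle is the input that $\G$ is not acylindrically hyperbolic --- this is exactly where the irreducibility of $\G$, provided by the anti-torus, is essential. One clean way to see it is that irreducibility forces the projections of $\G$ to the two tree factors to be non-discrete, so that the $\G$-stabilizer of an end of $T_1$ is an infinite commensurated subgroup of infinite index, obstructing acylindrical hyperbolicity just as $\langle g_1\rangle\leq\La_C$ did above. The remaining points are formal consequences of Lemma \ref{lem:dominate}, the trichotomy for acylindrical actions on hyperbolic spaces, and the commensurability structure of cylinders.
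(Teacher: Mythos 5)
Your overall architecture is the same as the paper's: verify that $\La\acts T_c$ is a cobounded acylindrical action on a hyperbolic space, then use Lemma \ref{lem:dominate} to reduce maximality to showing that the $T_c$-vertex stabilizers have bounded orbits in $R$. Your acylindricity argument (trivial pointwise stabilizers of length-$4$ paths $v_0,C_0,v_1,C_1,v_2$, via non-commensurability of the infinite cyclic edge stabilizers in distinct cylinders) is essentially identical to the paper's, and your transitivity computation for coboundedness is fine. For the cylinder stabilizers $\La_C$ your conclusion is correct, but be careful with the principle you invoke: ``a group with an infinite commensurated subgroup of infinite index is not acylindrically hyperbolic'' is not a theorem I know of in that generality. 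What is true, and suffices here, is Osin's result that an s-normal (in particular, infinite commensurated) subgroup of an acylindrically hyperbolic group is itself acylindrically hyperbolic; since $\langle g_1\rangle\cong\Z$ is commensurated in $\La_C$ and $\Z$ is elementary, $\La_C$ is not acylindrically hyperbolic, and the trichotomy does the rest. (The paper instead derives a direct contradiction with acylindricity of $\La\acts R$ from the infinite intersections $\langle g_1\rangle\cap g^k\langle g_1\rangle g^{-k}$; both routes work.)

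The genuine gap is in the type-$0$ case, i.e.\ the claim that $\G$ is not acylindrically hyperbolic. Your proposed mechanism fails twice over. First, the $\G$-stabilizer of an end of $T_1$ is not commensurated: commensuration would require the orbit of every other end under $\mathrm{Stab}_\G(\xi)$ to be finite, which there is no reason to expect (already in a free group acting on its Cayley tree, $\langle a\rangle=\mathrm{Stab}(a^{\infty})$ does not commensurate its conjugates). The subgroups of $\G$ that \emph{are} infinite, commensurated and of infinite index are the stabilizers of vertex fibers $\{v_1\}\times T_2$ (commensurated because $T_1$ is locally finite); but these act freely and cocompactly on $T_2$, so they are typically non-abelian free, hence themselves acylindrically hyperbolic -- so the s-normality obstruction gives no contradiction, and you are left relying entirely on the unproved blanket principle above. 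The paper closes exactly this step by a different argument: if $\G=\La_v$ had unbounded orbits in $R$, the (acylindrical) restricted action would admit a loxodromic element $g$; by \cite[Theorem 1.4]{Osin16} $g$ lies in a virtually cyclic hyperbolically embedded subgroup, hence is Morse by \cite[Theorem 1]{Sisto16}; but a group quasi-isometric to a product of two unbounded trees has no Morse elements. You would need either this argument or a citable theorem that irreducible cocompact lattices in products of trees admit no non-elementary acylindrical actions; as written, this step of your proof does not go through.
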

\begin{proof}
First we show that the action of $\La$ on $T_c$ is acylindrical.
It suffices to show that only the identity element of $\La$ fixes a path in $T_c$ of the form $v_1,C_1,u,C_2,v_2$, where $u,v_i\in V_0 T_c$ and $C_i\in V_1 T_c$. Indeed if $g\in\Lambda$ fixes such a path, then it must also fix any pair of edges $e_1,e_2\in ET$ that lie on the geodesics $[u,v_1]$ and $[u,v_2]$ respectively.
It follows that $e_1,e_2$ lie in the cylinders $C_1,C_2$, and so the stabilizers $\La_{e_1},\La_{e_2}$ are not commensurable. But these stabilizers are infinite cyclic (they are conjugates of $\langle g_1\rangle$), hence they have trivial intersection, so $g\in\La_{e_1}\cap\La_{e_2}$ is trivial.

The action $\La\acts T_c$ is clearly cobounded and $T_c$ is obviously hyperbolic, so it remains to show that $\La\acts T_c$ dominates all other cobounded acylindrical actions of $\La$ on hyperbolic spaces. 
Let $\La\acts R$ be such an action.
By Lemma \ref{lem:dominate}, it suffices to show that the vertex stabilizers of $T_c$ have bounded orbits in $R$.

First consider $v\in V_0 T_c$. If the stabilizer $\La_v$ does not have bounded orbits in $R$ then there must exist a loxodromic element $g\in\La_v$ by \cite[Theorem 1.1]{Osin16}, and $g$ must be contained in a virtually cyclic hyperbolically embedded subgroup of $\La_v$ by \cite[Theorem 1.4]{Osin16}. It then follows from \cite[Theorem 1]{Sisto16} that $g$ is Morse in $\La_v$. But this is impossible since $\La_v\cong\G$ is quasi-isometric to a product of trees.

Now consider $C\in V_1 T_c$. Without loss of generality we may assume that $C$ contains the edge that corresponds to the subgroup $\langle g_1\rangle <\G<\La$. We know that $\La_C$ contains the element $g_1$ as well as the stable letter $t$ from the presentation (\ref{Lapres}), so $\La_C$ is not virtually cyclic. 
If $\La_C$ does not have bounded orbits in $R$ then there must exist a loxodromic element $g\in\La_C$ by \cite[Theorem 1.1]{Osin16}.
Fix a point $r\in R$.
The element $g_1$ lies in the $V_0 T_c$-vertex stabilizer $\G<\La$, so $g_1$ is elliptic in $R$ by the previous paragraph, and the orbit $\langle g_1\rangle\cdot r$ lies in some $\epsilon$-ball about $r$.
By definition of $C$, for any integer $k$ the subgroups $\langle g_1\rangle$ and $g^k\langle g_1\rangle g^{-k}$ are commensurable, so have infinite intersection; and any element $h$ of this intersection satisfies $d_R(r,hr),d_R(g^k r,hg^k r)<\epsilon$. But $d(r,g^kr)\to\infty$ as $k\to\infty$ since $g$ is loxodromic, which contradicts the acylindricity of the action $\La\acts R$.
\end{proof}

\bibliographystyle{alpha}
\bibliography{Ref}

\end{document}